\newcommand{\SO}{\mathrm{SO}}
\newcommand{\SL}{\mathrm{SL}}
\newcommand{\vol}{\operatorname{vol}}
\title{A note on maximal lattice growth in $\SO(1,n)$}
\author{Jean Raimbault}
\address{Institut de Math\'ematiques de Jussieu \\Unit\'e Mixte de Recherche 7586 du CNRS \\Universit\'e Pierre et Marie Curie \\4, place Jussieu 75252 Paris Cedex 05, France.}\email{raimbault@math.jussieu.fr}
\begin{document}

\numberwithin{equation}{section}

\newtheorem{theo}{Theorem}[section]
\newtheorem{lem}[theo]{Lemma}
\newtheorem{prop}[theo]{Proposition}
\newtheorem{rmk}[subsection]{Remark}
\newtheorem{cor}[theo]{Corollary}

\begin{abstract}
We show that the number of maximal lattices with volume less than $v$ is 
at least exponential in $v$. To do this we construct families of 
noncommensurable Gromov/Piatetski-Shapiro manifolds having the same volume.
\end{abstract}

\maketitle

\section{Introduction}

In their famous paper \cite{GPS} Gromov and Piatetski-Shapiro have shown 
how to construct examples of nonarithmetic hyperbolic $n$-manifolds in any 
dimension $n\ge 3$ by gluing together 
submanifolds of noncommensurable standard arithmetic manifolds. Here we aim 
to show that their methods 
have other applications, for example we construct arbitrarily 
large families of noncommensurable manifolds with the same volume:

\begin{theo}
Let $n\ge 3$; there exists constants $V,K>0$ depending on $n$ such that for all 
integers $m\ge V$ there is a family of at least $2^m$ noncommensurable 
(nonarithmetic) compact hyperbolic $n$-manifolds all having the same volume 
$v\le Km$.
\label{samevol}
\end{theo}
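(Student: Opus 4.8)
The plan is to carry out the Gromov--Piatetski-Shapiro interbreeding construction ``with repetitions'': assemble long cyclic chains from two fixed arithmetic building blocks, and then recover the gluing pattern of such a chain from the geometry of the manifold it produces.

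\emph{Building blocks.} Fix a totally real number field $k$ and, following \cite{GPS}, two quadratic forms $q_1,q_2$ over $k$ that have signature $(1,n)$ at one real place, are definite at every other place, contain a common codimension-one subform $q_0$, but are not similar over $k$; there are plainly infinitely many such pairs. The associated arithmetic lattices $\Gamma_1,\Gamma_2<\SO(1,n)$ are cocompact and noncommensurable, and each quotient $V_i=\mathbb{H}^n/\Gamma_i'$ (for a torsion-free finite-index $\Gamma_i'$) contains an embedded totally geodesic hypersurface whose fundamental group is commensurable with the arithmetic group of $q_0$. After passing to suitable finite covers, which is possible by the standard separability and covering properties of arithmetic hyperbolic manifolds, I may assume that $V_i$ contains two disjoint totally geodesic hypersurfaces whose complement $b_i$ is connected with two boundary components, all four boundary components (for $i=1,2$) being isometric to one fixed closed hyperbolic $(n-1)$-manifold $N$. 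Set $v_i=\vol(b_i)$.

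\emph{Cyclic chains and volumes.} To a cyclic word $w=(i_1,\dots,i_L)\in\{1,2\}^L$ attach the closed hyperbolic $n$-manifold $M_w$ obtained by gluing $b_{i_1},\dots,b_{i_L}$ end to end around a circle, each time identifying two boundary copies of $N$ by an isometry. When both symbols occur in $w$, the manifold $M_w$ is nonarithmetic by the argument of \cite{GPS}: otherwise the Zariski-dense subgroups of $\pi_1 M_w$ carried by a $b_1$-block and a $b_2$-block would be arithmetic for a common structure, forcing $\Gamma_1$ and $\Gamma_2$ to be commensurable. The volume depends only on the number $a$ of indices equal to $1$: one has $\vol(M_w)=av_1+(L-a)v_2$.

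\emph{Counting.} Given $m\ge V$, let $L$ be the least even integer for which the number of primitive balanced cyclic words of length $L$ (those with $a=L/2$, counted up to rotation and reflection, discarding the negligibly many that are proper powers) exceeds $2^m$; since there are $\binom{L}{L/2}$ balanced words, one checks $L=m+O(\log m)$. All the corresponding $M_w$ are compact, nonarithmetic, and share the common volume $v=\tfrac{L}{2}(v_1+v_2)\le Km$ for a suitable constant $K=K(n)$. Thus it suffices to show that inequivalent words in this list give noncommensurable manifolds.

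\emph{Commensurability rigidity (the crux).} The decomposition of $M_w$ into the blocks $b_{i_j}$ is recorded by its dual graph, a vertex-labelled $L$-cycle, and any finite covering $\widetilde M\to M_w$ restricts to coverings of the pieces, so the dual graph $\widetilde G$ of $\widetilde M$ is a connected covering of that $L$-cycle; as the connected coverings of a cycle are again cycles, $\widetilde G$ is a $kL$-cycle labelled by $w^k$. If $M_w$ and $M_{w'}$ were commensurable they would have a common finite cover, whose dual graph would then be simultaneously $w^k$- and $(w')^{k'}$-labelled with $kL=k'L=k'L'$ (recall all our words share the length $L$), forcing $w=w'$ up to rotation and reflection since both are primitive. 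The genuine content, and the step I expect to be the main obstacle, is to justify that a common cover must respect these block decompositions at all, i.e.\ that the family of gluing hypersurfaces is intrinsic to the commensurability class -- in particular that no totally geodesic hypersurface crossing a gluing locus can arise, which must be arranged through a careful choice of the $V_i$ and of the cutting hypersurfaces, invoking the finer structure theory behind \cite{GPS}.
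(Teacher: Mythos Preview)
Your construction, volume computation, and counting are exactly the paper's approach, and you have correctly located the crux: why must a common finite cover respect the two block decompositions? But the resolution is not the one you anticipate. No special choice of the $V_i$ or of the cutting hypersurfaces is made, and there is no attempt to exclude totally geodesic hypersurfaces that cross the gluing loci; many such hypersurfaces may well exist.

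What the paper uses instead is the arithmeticity of the blocks, via the standard criterion: if $\Gamma,\Gamma'<\SO(1,n)$ are arithmetic and $\Gamma\cap\Gamma'$ is Zariski-dense, then $\Gamma\cap\Gamma'$ has finite index in both. The consequence one draws is that in \emph{any} complete hyperbolic $n$-manifold $W$, an isometrically embedded finite cover of a $b_1$-block and one of a $b_2$-block can only meet in a set with empty interior. Indeed, if their intersection had interior, then (since $n\ge 3$) a boundary component of each block meets the interior of the other, and one checks that the intersection carries a Zariski-dense subgroup of $\SO(1,n)$; pushing this subgroup into $\Gamma_1$ and $\Gamma_2$ contradicts their noncommensurability. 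Applied to a common cover $W$ of $M_w$ and $M_{w'}$, this forces the union of $1$-pieces coming from the $M_w$-structure to coincide with that coming from the $M_{w'}$-structure, so each maximal run of $1$'s (resp.\ $2$'s) in $w$ matches a maximal run of $1$'s (resp.\ $2$'s) in $w'$. That the matched runs have the \emph{same length} is then read off geometrically, from the distance between the two boundary components of the corresponding piece in $W$. After this your dual-graph argument goes through unchanged. So the missing ingredient is this Zariski-density/arithmeticity lemma, not any refinement of the building blocks.
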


This can be seen as giving an explicit lower bound in Wang's theorem 
\cite{Wang} that there are only finitely many hyperbolic $n$-manifolds 
($n\ge 4$) with volume less than a 
given $v$, however in that respect it is much less good than existing results. 
For example, it is proven in \cite{BGLM} that for any large integer $m$ there 
are at least $m!$ hyperbolics $n$-manifolds the same volume less than 
$K'm$ (for some $K'$ depending on the dimension $n$). However all these 
manifolds are constructed as coverings of a fixed manifold, so that our result 
is slightly different from theirs or those proved in \cite{Emery} or 
\cite{Zimmermann}, in particular it implies:

\begin{cor}
Let $L_{com}(v)$ be the number of commensurability classes of compact 
hyperbolic $n$-manifolds which have an element with volume less than $v$. 
Then 
\begin{equation*}
\liminf_{v\rightarrow\infty}\frac{\log L_{com}(v)}v > 0.
\end{equation*}
\end{cor}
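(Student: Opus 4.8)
The plan is to deduce the corollary directly from Theorem~\ref{samevol}; all of the genuine work sits inside that theorem, and the deduction is a short counting argument with no real obstacle.

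First I would fix the constants $V,K>0$ furnished by Theorem~\ref{samevol}. Given a real number $v>KV$, I would choose the integer $m=\lceil v/K\rceil-1$, which satisfies $m\ge V$ (since $v/K>V$) as well as $Km<v$. Theorem~\ref{samevol} then supplies a family of at least $2^m$ pairwise noncommensurable compact hyperbolic $n$-manifolds, all of the same volume, and that common volume is at most $Km$, hence strictly less than $v$.

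Next I would observe that pairwise noncommensurable manifolds lie, by definition, in pairwise distinct commensurability classes, and that each of these $2^m$ classes is represented by one of the constructed manifolds, which has volume $<v$. Therefore $L_{com}(v)\ge 2^m\ge 2^{\,v/K-1}$, and taking logarithms gives
\[
\frac{\log L_{com}(v)}{v}\ \ge\ \frac{m\log 2}{v}\ \ge\ \frac{\log 2}{K}-\frac{\log 2}{v}
\]
for every $v>KV$. Letting $v\to\infty$ yields $\liminf_{v\to\infty}\frac{\log L_{com}(v)}{v}\ge\frac{\log 2}{K}>0$, which is the assertion.

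The only point requiring a little care is the strict-versus-nonstrict inequality in the definition of $L_{com}(v)$: because $m$ was chosen with $Km<v$, the manifolds produced really do have volume $<v$; alternatively one could run the argument with $v-1$ in place of $v$, which affects nothing in the limit. As indicated, the substantive difficulty is entirely in Theorem~\ref{samevol} — constructing, for each large $m$, the $2^m$ noncommensurable Gromov/Piatetski-Shapiro manifolds of equal volume bounded by $Km$ — and not in this corollary.
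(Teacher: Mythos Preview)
Your deduction is correct and is exactly the approach the paper intends: the corollary is stated without proof, as an immediate consequence of Theorem~\ref{samevol}, and your counting argument is precisely the intended (and only reasonable) way to spell it out. The one cosmetic point is that ``$m\ge V$'' from $v>KV$ tacitly assumes $V$ is an integer, but since the condition in Theorem~\ref{samevol} is on integers $m\ge V$ one may replace $V$ by $\lceil V\rceil$ without loss, so this is harmless.
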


Since there is at least one maximal subgroup with minimal covolume in each 
commensurability class, 
it also follows from Theorem \ref{samevol} that for $v\ge V$ there 
exists at least $2^{\frac v K}$ maximal (nonarithmetic) uniform lattices 
with covolume less than $v$. Interestingly, the growth of maximal 
\textit{arithmetic} subgroups is subexponential by a result 
of Misha Belopelitsky (cf. \cite{Belopelitsky}), so that the growth 
rate of maximal nonarithmetic subgroups is strictly bigger.

\begin{cor}
Let $L_{\text{max}}(v)$ (resp. $L_{a,\text{max}}(v)$) be the number 
of conjugacy classes of maximal uniform lattices in $\SO(1,n)$ 
(resp. arithmetic ones) with covolume less than $v$. 
Then $L_{\text{max}}$ is at least exponential, in particular we have:
\begin{equation*}
\lim_{v\rightarrow\infty}\frac{\log L_{a,\text{max}}(v)}{\log L_{\text{max}}(v)}=0. \label{compmax}
\end{equation*}
\end{cor}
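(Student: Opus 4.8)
The plan is to read off both assertions from Theorem \ref{samevol} combined with the known bounds on maximal arithmetic lattices. For the exponential lower bound on $L_{\text{max}}$ I would first recall that, by Margulis' theorem, a nonarithmetic lattice $\Gamma<\SO(1,n)$ has finite index in its commensurator $\mathrm{Comm}(\Gamma)$, which is again a lattice (uniform if $\Gamma$ is) and is the unique maximal lattice of the commensurability class of $\Gamma$; moreover $\vol(\SO(1,n)/\mathrm{Comm}(\Gamma))\le\vol(\SO(1,n)/\Gamma)$ since $\mathrm{Comm}(\Gamma)\supseteq\Gamma$. Applying this to the fundamental groups of the $2^m$ pairwise noncommensurable manifolds of volume $v\le Km$ furnished by Theorem \ref{samevol}, and using that conjugate subgroups are commensurable so that noncommensurable manifolds yield nonconjugate maximal lattices, one obtains at least $2^m$ conjugacy classes of maximal uniform lattices of covolume at most $Km$. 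Setting $v=Km$ this gives $L_{\text{max}}(v)\ge 2^{v/K}$ for every $v\ge KV$, hence
\begin{equation*}
\liminf_{v\to\infty}\frac{\log L_{\text{max}}(v)}{v} \ge \frac{\log 2}{K} > 0,
\end{equation*}
which is the asserted ``at least exponential'' growth (and, counting commensurability classes rather than maximal lattices, this same argument also proves the preceding corollary on $L_{com}$).

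For the limit I would invoke Belopelitsky's theorem \cite{Belopelitsky}: the number $L_{a,\text{max}}(v)$ of conjugacy classes of maximal \emph{arithmetic} uniform lattices of covolume at most $v$ grows subexponentially, that is $\log L_{a,\text{max}}(v)=o(v)$ (in fact it is bounded above by a fixed power of $\log v$). Combining this with the lower bound just established, for all large $v$ we have
\begin{equation*}
0 \le \frac{\log L_{a,\text{max}}(v)}{\log L_{\text{max}}(v)} \le \frac{K}{\log 2}\cdot\frac{\log L_{a,\text{max}}(v)}{v},
\end{equation*}
and the right-hand side tends to $0$ as $v\to\infty$, which is the claimed identity.

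I do not expect a genuine obstacle here: the substance is entirely contained in Theorem \ref{samevol} and in the cited subexponential bound, and what remains is bookkeeping. The points to verify are only that passing to the commensurator does not increase covolume (immediate, by inclusion), that distinct commensurability classes produce distinct conjugacy classes of maximal lattices (immediate, since conjugation preserves commensurability), and that Belopelitsky's estimate is quoted in a form — $o(v)$ on the logarithmic scale — strong enough to annihilate the quotient; the last is far weaker than what is actually proved there, so no sharpening is needed.
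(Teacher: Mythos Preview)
Your argument is correct and essentially identical to the paper's, which gives only the brief paragraph preceding the corollary: Theorem \ref{samevol} produces exponentially many commensurability classes, each contributing a maximal uniform lattice of covolume at most $v$, and then Belolipetsky's subexponential bound on $L_{a,\text{max}}$ forces the ratio to $0$. Your invocation of Margulis' commensurator theorem to identify the maximal lattice is a clean way of making explicit what the paper phrases as ``at least one maximal subgroup with minimal covolume in each commensurability class.''
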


For $n=3$ the number of lattices with volume less than $v$ is infinite for $v$ 
big enough, but this number becomes finite if we add the condition that the 
systole be bigger than any positive constant. In our theorem the manifolds 
have their systoles bounded below by a positive constant. 
As an aside, we also construct manifolds with arbitrarily large volume the 
size of whose isometry group stays bounded. All our constructions are 
quite costless in technology (we only use the Gromov/Piatetski-Shapiro 
paper). 

\subsection*{Acknowledgments} Part of this work comes from the joint work 
\cite{7S} (see also the announcement in CRAS Tome 349, Fascicule 15-16 or 
\href{http://arxiv.org/abs/1104.5559}{arXiv:1104.5559}) 
where it was used for other purposes.

\section{Gluing manifolds}

\subsection{General construction}
\label{gluing}

Let $r\ge 2$  and $N_1,\ldots,N_r$ be real compact hyperbolic $n$-manifolds 
such that each has totally geodesic boundary, and each boundary is the 
disjoint union of two 
copies of some hyperbolic $(n-1)$-manifold $\Sigma$. We choose 
for each manifold $N_a$ a component $\Sigma_a^+$ of $\partial N_a$, and 
denote the other one by $\Sigma_a^-$; we call $i_{a}^{\pm}$ the corresponding 
embeddings of $\Sigma$ in $\partial N_a$. For an integer $m>0$ we identify 
the cyclic group $\mathbb{Z}/m\mathbb{Z}$ with the set $\{0,\ldots,m-1\}$. 
For $\alpha=(\alpha_0,\ldots,\alpha_{m-1})\in\{1,\ldots,r\}^{\mathbb{Z}/m\mathbb{Z}}$ 
we define $M_{\alpha}$ to be the closed hyperbolic manifold obtained by 
gluing copies of the $N_k$ in the manner prescribed by $\alpha$:
\begin{equation*}
M_{\alpha}=\left(\bigsqcup_{i\in\mathbb{Z}/m\mathbb{Z}}N_{\alpha_i}\times\{i\}\right)/\{\forall i\in\mathbb{Z}/m\mathbb{Z},x\in\Sigma,\:(i_{\alpha_i}^+ x,i)= (i_{\alpha_{i+1}}^- x,i+1)\}.
\end{equation*}
For $k=0,\ldots,m$ we shall take the (abusive) convention of denoting 
by $N_{\alpha_k}$ the submanifold image of $N_{\alpha_k}\times\{k\}$ in 
$N_{\alpha}$ and by $\Sigma_k$ the hypersurface image of 
$i_{\alpha_k}^-(\Sigma)\times\{k\}$. 
In the same way, for a subsequence $(\alpha_k,\ldots,\alpha_{k+l})$ 
we denote by $N_{\alpha_k,\ldots,\alpha_{k+l}}$ the submanifold of 
$N_{\alpha}$ image of the $N_{\alpha_k}\times\{k\},\ldots,
N_{\alpha_{k+l}}\times\{k+l\}$.

\subsection{Interbreeding arithmetic manifolds}

According to the construction of Proposition \ref{millson} below there exists 
a family $M_k, k\ge 1$ of arithmetic hyperbolic closed $n$-manifolds 
such that any two of them are noncommensurable but they all contain an 
embedded nonseparating totally geodesic hypersurface isometric to some 
fixed $\Sigma$. Letting $N_k$ be the completion of 
$M_k-\Sigma$ (so that it has boundary $\Sigma\sqcup\Sigma$), for all 
$r>0$ we can perform with 
$N_1,\ldots,N_r$ the constructions of Section \ref{gluing}. 
(Gromov and Piatetski-Shapiro call this interbreeding of manifolds, and the 
resulting nonarithmetic manifolds are called hybrids).
There is a natural (shift) action of 
$\mathbb{Z}/m\mathbb{Z}$ on $\{1,\ldots,r\}^{\mathbb{Z}/m\mathbb{Z}}$ and it is clear 
that we obtain 
the same manifolds for two collections of indices in the same orbit. 
The main technical result is that this is the only case when they 
are commensurable (recall that 
two Riemannian manifolds are said to be commensurable if they share a finite 
cover up to isometry):

\begin{prop}
Let $\alpha_1,\ldots,\alpha_m,\beta_1,\ldots,\beta_m \in \mathbb{N}$. 
The manifolds $M_{\alpha_1,\ldots,\alpha_m}$ and 
$M_{\beta_1,\ldots,\beta_m}$ are commensurable if and only if there 
is a p such that $\alpha_i=\beta_j$ as soon as $i+p=j\pmod{m}$, i.e. if 
$\alpha$ and $\beta$ are in the same $\mathbb{Z}/m\mathbb{Z}$-shift orbit.
\label{noncommensurable}
\end{prop}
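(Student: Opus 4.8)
The plan is to exploit the structure of the gluing decomposition as a canonical invariant of the manifold, recovered via Mostow rigidity. First I would note that one direction is trivial: if $\alpha$ and $\beta$ lie in the same shift orbit, the manifolds $M_\alpha$ and $M_\beta$ are literally isometric (this was already observed in the excerpt), hence commensurable. So the content is the converse.

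For the converse, suppose $M_\alpha$ and $M_\beta$ are commensurable, so by Mostow rigidity they share a common finite cover $\widetilde{M}$. The key geometric input is that each piece $N_k$ is (the completion of) an arithmetic manifold minus a totally geodesic hypersurface, and crucially the manifolds $M_k$ are pairwise \emph{noncommensurable}; this is what lets us distinguish the pieces. I would argue that the hypersurfaces $\Sigma_i$ appearing in the gluing description of $M_\alpha$ are, up to isotopy, canonically determined: they should be recoverable as (a subset of) the totally geodesic hypersurfaces of $M_\alpha$, or via the maximal totally geodesic sub-hypersurfaces separating $M_\alpha$ into arithmetic pieces. The point is that the universal cover $\mathbb{H}^n$ of $\widetilde{M}$ inherits a tessellation by lifts of the $N_k$, and the walls of this tessellation are the lifts of the $\Sigma_i$. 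An isometry $\mathbb{H}^n \to \mathbb{H}^n$ intertwining the two deck group actions (coming from $M_\alpha$ and $M_\beta$) must — because the arithmeticity/commensurability type of the stabilizer of each chamber is an invariant — carry chambers of type $N_k$ to chambers of type $N_k$ (same $k$, since the $M_k$ are mutually noncommensurable and arithmeticity is detected on the chamber, whose "double" is $M_k$). This forces the adjacency pattern — which is exactly the cyclic word $\alpha$ up to rotation and reflection — to be preserved.

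More carefully, I would want to (i) verify that in $M_\alpha$ the embedded totally geodesic hypersurface $\Sigma$ is \emph{nonseparating} in each individual piece but that the collection $\{\Sigma_i\}$ cuts $M_\alpha$ into the cyclically-arranged pieces, and that the decomposition is geometrically canonical (e.g. the $\Sigma_i$ are the unique — or a distinguished finite — collection of embedded totally geodesic hypersurfaces of their topological type realizing the cyclic splitting); (ii) use commensurability invariance of the arithmetic commensurability class of the metric completion of each complementary chamber's double to match $N_{\alpha_i}$ with $N_{\beta_{j}}$; and (iii) read off that the matching is a cyclic shift, possibly composed with an orientation-reversing reflection of the cyclic word — and here I should check whether the construction is symmetric enough that a reflection also lies in the shift orbit, or else state the conclusion up to dihedral symmetry. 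One technical subtlety: passing to a common finite cover means the cyclic words get replaced by their $d$-fold concatenations for suitable $d$ on each side, so I would phrase the matching at the level of these periodic bi-infinite words on the line $\mathbb{R}$ (the universal cover of the circle $\mathbb{Z}/m\mathbb{Z}$) and deduce equality of the underlying primitive periods.

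The main obstacle I expect is step (i): establishing that the gluing hypersurfaces are a commensurability-canonical feature of $M_\alpha$, rather than just one arbitrary choice of totally geodesic hypersurface decomposition. One must rule out "exotic" totally geodesic hypersurfaces in the hybrid $M_\alpha$ that could cut it up differently and fake a different word. This is presumably handled by a result from \cite{GPS} constraining totally geodesic hypersurfaces in hybrids (they cannot cross the gluing locus transversally in a nontrivial way, because on one side they would have to be a totally geodesic hypersurface of an arithmetic manifold meeting $\Sigma$, and the arithmetic structure is rigid), so I would lean on the Gromov–Piatetski-Shapiro analysis here; alternatively one can use the cutting hypersurface itself as canonical because its complement's components have \emph{non-isomorphic} arithmetic doubles, a property not shared by any other hypersurface configuration. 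Once (i) is in hand, (ii) and (iii) are bookkeeping with commensurability invariants and combinatorics of periodic words.
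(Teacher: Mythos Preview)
Your plan correctly isolates the converse direction, passes to a common finite cover $W$, and identifies the pairwise noncommensurability of the $M_k$ as what distinguishes the pieces. But your step (i) --- showing that the collection $\{\Sigma_i\}$ is a commensurability-canonical feature of $M_\alpha$ --- is both the step you flag as hardest and, as stated, not true. The trouble is the runs: if $\alpha_i=\alpha_{i+1}=\cdots=\alpha_j=k$, then each intermediate hypersurface $\Sigma_{i+1},\ldots,\Sigma_j$ separates two pieces whose doubles are \emph{both} commensurable to $M_k$, so your alternative characterisation (``complement components have non-isomorphic arithmetic doubles'') fails to single them out. In the extreme case where all $\alpha_i=k$, the manifold $M_\alpha$ is an arithmetic cyclic cover of $M_k$ and typically contains many totally geodesic copies of $\Sigma$; none of the $\Sigma_i$ is canonical. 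So the tessellation of $\mathbb{H}^n$ you describe is not intrinsic, and there is no reason a priori for a deck-group-intertwining isometry to respect it.

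The paper sidesteps this rather than resolving it. It never argues that individual hypersurfaces are canonical; instead it proves an overlap lemma: if $N_0,N_1$ sit inside noncommensurable arithmetic manifolds, then isometric embeddings of finite covers of $N_0,N_1$ into any common hyperbolic manifold meet in a set with empty interior (this is where the Zariski-density criterion from \cite{GPS} enters). Applied in $W$, this forces each connected component of $\pi^{-1}(N_{\alpha_{i+1},\ldots,\alpha_j})$ for a \emph{maximal run} ($\alpha_i\neq\alpha_{i+1}=\cdots=\alpha_j\neq\alpha_{j+1}$) to coincide exactly with a component of $(\pi')^{-1}(N_{\beta_{i'+1},\ldots,\beta_{j'}})$ for some maximal $\beta$-run of the same label, and a boundary-distance argument matches the lengths. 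Lifting a transverse loop from $M_\alpha$ to $W$ then reads off the maximal-run decomposition of $\beta$ as a cyclic shift of that of $\alpha$; the constant-sequence case (both manifolds arithmetic) is disposed of separately at the start. The fix to your outline is thus to replace ``the $\Sigma_i$ are canonical'' by ``the maximal runs have canonical preimages in $W$'', which the overlap lemma gives directly without any analysis of exotic hypersurfaces.
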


We will prove this in section \ref{proof} below. Now we give the proof of 
our main result.


\subsection{Manifolds with the same volume}

We keep the same notation as above and 
denote $\vol(M_k)=v_k$. Theorem \ref{samevol} follows immediately from the 
case $r=2$ of the next result, which gives a more precise estimate.

\begin{prop}
For all positive integers $r,m$ there exists a family of noncommensurable
closed hyperbolic $n$-manifolds all having the same volume 
$\sum_{i=1}^r mv_i$ which contains exactly $a_m$ elements, and 
$a_m\underset{m\rightarrow\infty}{\sim} m^{-1}(2\pi m)^{-\frac {r-1}2}r^{rm-1}$
\label{volume}
\end{prop}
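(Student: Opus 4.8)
The plan is to let the family be the ``balanced'' hybrids. Keeping the notation of the previous subsections, fix $r$ of the arithmetic manifolds $M_1,\dots,M_r$, all containing the fixed totally geodesic hypersurface $\Sigma$, put $N_k=\overline{M_k\setminus\Sigma}$, and apply the construction of Section~\ref{gluing} with a cycle of length $rm$: for $\alpha\in\{1,\dots,r\}^{\mathbb{Z}/rm\mathbb{Z}}$ one gets a closed hyperbolic manifold $M_\alpha$. Since $\Sigma$ has measure zero we have $\vol(N_k)=\vol(M_k)=v_k$, and since $M_\alpha$ is glued from the pieces $N_{\alpha_0},\dots,N_{\alpha_{rm-1}}$ along totally geodesic hypersurfaces, $\vol(M_\alpha)=\sum_{i\in\mathbb{Z}/rm\mathbb{Z}}v_{\alpha_i}$. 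I then restrict to the set $B$ of \emph{balanced} sequences, those in which each value $k$ occurs exactly $m$ times; for $\alpha\in B$ one has $\vol(M_\alpha)=\sum_{k=1}^r m v_k=\sum_{i=1}^r m v_i$, so all the $M_\alpha$, $\alpha\in B$, have the prescribed volume.

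The next step is to count how many of them are pairwise noncommensurable. The shift action of $\mathbb{Z}/rm\mathbb{Z}$ preserves $B$; shift-equivalent sequences give the same manifold up to isometry, and by Proposition~\ref{noncommensurable} (with $rm$ in the role of $m$) two of these manifolds $M_\alpha,M_\beta$ are commensurable \emph{only} if $\beta$ lies in the $\mathbb{Z}/rm\mathbb{Z}$-orbit of $\alpha$. Hence any subfamily of $\{M_\alpha:\alpha\in B\}$ consisting of pairwise noncommensurable manifolds meets each orbit at most once, and picking one representative per orbit realises the bound; so the maximal size $a_m$ is exactly the number of $\mathbb{Z}/rm\mathbb{Z}$-orbits in $B$, i.e.\ the number of necklaces of length $rm$ in $r$ colours using each colour $m$ times. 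Burnside's lemma then gives
\begin{equation*}
a_m=\frac1{rm}\sum_{g\in\mathbb{Z}/rm\mathbb{Z}}\bigl|\operatorname{Fix}(g)\bigr|=\frac1{rm}\sum_{d\mid m}\varphi(d)\,\frac{(rm/d)!}{((m/d)!)^r},
\end{equation*}
since a word in $B$ is fixed by a rotation of order $d$ exactly when it is the $d$-fold concatenation of a length-$rm/d$ word using each colour $m/d$ times, which forces $d\mid m$ and leaves $(rm/d)!/((m/d)!)^r$ choices, rotations of order $d\nmid m$ having no fixed point in $B$.

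Finally I would read off the asymptotics. In the Burnside sum the term $d=1$ contributes $\frac1{rm}\frac{(rm)!}{(m!)^r}$; for $d\ge 2$ the multinomial $\frac{(rk)!}{(k!)^r}$ is increasing in $k$, so each such term is at most $\varphi(d)\,\frac{(rm/2)!}{((m/2)!)^r}$ and the whole tail is $O\!\bigl(m\cdot\frac{(rm/2)!}{((m/2)!)^r}\bigr)$, which is exponentially smaller than the $d=1$ term. Thus $a_m\sim\frac1{rm}\frac{(rm)!}{(m!)^r}$, and applying Stirling's formula to the three factorials produces the stated equivalent. The argument is short and essentially elementary; all the real content is carried by Proposition~\ref{noncommensurable}, which is what guarantees that distinct necklaces yield genuinely noncommensurable (a fortiori non-isometric) manifolds. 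The only point that deserves a little care is the Burnside bookkeeping — identifying precisely which rotations fix a balanced word, and how many — but I do not expect any serious obstacle there.
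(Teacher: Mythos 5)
Your proposal is correct and follows essentially the same route as the paper: the same family of balanced gluing sequences of length $rm$, with Proposition \ref{noncommensurable} reducing the count to shift-orbits and Stirling's formula giving the asymptotics; your Burnside necklace count is in fact a sharper version of the paper's argument, which only bounds the number of commensurability classes below by $(rm)^{-1}\frac{(rm)!}{(m!)^r}$. The only blemish, inherited from the paper's own statement, is a constant: Stirling actually gives $\frac{(rm)!}{(m!)^r}\sim\sqrt{r}\,(2\pi m)^{-\frac{r-1}{2}}r^{rm}$, so the equivalent for $a_m$ should carry an extra factor $\sqrt{r}$ --- harmless, since $r$ is fixed.
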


\begin{proof}
Let $S$ be the set of all manifolds $M_{\alpha_1,\ldots,\alpha_{rm}}$ such 
that for each $k=1,\ldots,r$ exactly $m$ of all $\alpha_i$ are equal to $k$, 
modulo commensurability. All manifolds in this set have volume 
$\sum_{i=1}^r mv_i$. According to Proposition \ref{noncommensurable} the 
cardinality of $S$ is bigger than $(rm)^{-1}$ times the number of such 
collections of $\alpha_i$. This last number is equal to:
\begin{equation*}
\binom{mr}{m}\times\binom{m(r-1)}{m}\times\ldots\times\binom{2m}{m} 
   =\frac{(rm)!}{(m!)^r} \sim_{m\rightarrow\infty} (2\pi m)^{-\frac {r-1}2}r^{rm}
\end{equation*}
where the asymptotic follows from Stirling's formula.
\end{proof}

We get a constant $K$ proportional to the volume $\max(v_1,v_2)$, in particular 
it could be estimated using the explicit construction of Proposition 
\ref{millson} and Prasad's volume formula. Note that taking a bigger $r$ would 
not improve on the 
growth rate since the constant $K\gg\max\vol(v_1,\ldots,v_r)$ would tend to 
infinity too quickly (stricly faster than $\log r$, as follows from 
the estimates in \cite{Belopelitsky}) so that the exponential growth rate 
$r^{1/K}$ would tend to 1.


\subsection{Manifolds with small group of isometries}

We take the same notation as in the preceding subsection, with $r=2$.

\begin{prop}
There exists a family $S_k,k\ge 1$ of closed hyperbolic $n$-manifolds 
such that $\vol(S_k)\rightarrow\infty$ but the groups of isometries 
$\mathrm{Isom}(S_k)$ have uniformly bounded order.
\end{prop}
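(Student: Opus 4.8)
The plan is to build the $S_k$ as hybrids $M_{\alpha}$ of the form from Section \ref{gluing}, but using a gluing pattern $\alpha$ designed so that the combinatorial structure of the pattern admits essentially no symmetry, and then to bound $\mathrm{Isom}(M_{\alpha})$ in terms of the symmetries of $\alpha$. The key point is that, since the pieces $N_1,N_2$ come from noncommensurable arithmetic manifolds, any isometry of $M_{\alpha}$ must (after analysing how it interacts with the geometric structure) respect the decomposition into pieces: it must send each copy of a totally geodesic hypersurface $\Sigma_k$ to another such hypersurface and hence permute the submanifolds $N_{\alpha_k}$, sending a copy of $N_i$ to a copy of $N_j$ only if $i=j$. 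This is exactly the kind of rigidity already exploited in the proof of Proposition \ref{noncommensurable}, so I would invoke that analysis rather than redo it. Concretely, I expect to show that there is a homomorphism from $\mathrm{Isom}(M_{\alpha})$ to the group of ``symmetries of the pattern'' $\alpha$ — that is, the group of permutations of $\mathbb{Z}/m\mathbb{Z}$ generated by the cyclic shifts and the reflection $i\mapsto -i$ that fix the function $i\mapsto\alpha_i$ — with kernel contained in $\prod_k\mathrm{Isom}(N_{\alpha_k},\partial)$, the product of the (finite) groups of isometries of the individual pieces preserving their boundary componentwise.

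First I would fix the two building blocks $N_1,N_2$ once and for all (from Proposition \ref{millson}, cut along $\Sigma$), note that $\mathrm{Isom}(N_i,\partial)$ is a fixed finite group $G_i$, and record the bound $\#\mathrm{Isom}(M_{\alpha})\le \#\mathrm{Dih}(\alpha)\cdot(\max(\#G_1,\#G_2))^{m}$ where $\mathrm{Dih}(\alpha)\le\mathrm{Dih}(\mathbb{Z}/m\mathbb{Z})$ is the dihedral symmetry group of the cyclic word $\alpha$. The exponent $m$ here is the nuisance: a priori an isometry could act nontrivially on each piece independently, giving an exponentially large group. To kill this, I would use that an isometry fixing every $\Sigma_k$ setwise and inducing the identity on the index set is determined by its restriction to a single piece $N_{\alpha_0}$ (it must then extend consistently across each gluing hypersurface, and the extension from one piece to the next is rigid), so in fact the kernel injects into $G_{\alpha_0}$ and the true bound is $\#\mathrm{Isom}(M_{\alpha})\le 2m\cdot\max(\#G_1,\#G_2)$ — already uniformly bounded once we also control $\mathrm{Dih}(\alpha)$. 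Then I would exhibit, for each $k$, a cyclic word $\alpha^{(k)}\in\{1,2\}^{\mathbb{Z}/m_k\mathbb{Z}}$ with $m_k\to\infty$ and \emph{trivial} dihedral symmetry group — e.g. a suitable aperiodic asymmetric binary necklace, which exists for all large $m_k$ by an easy counting argument (the number of binary necklaces with a nontrivial symmetry is $O(2^{m/2})$, negligible against $2^m/m$) — set $S_k=M_{\alpha^{(k)}}$, and conclude $\vol(S_k)=\sum_{i}(\#\{j:\alpha^{(k)}_j=i\})v_i\to\infty$ while $\#\mathrm{Isom}(S_k)\le 2\max(\#G_1,\#G_2)$.

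The main obstacle I anticipate is the rigidity step: proving that an isometry $\phi$ of $M_{\alpha}$ must genuinely permute the pieces and the gluing hypersurfaces, rather than, say, sending some $\Sigma_k$ to a \emph{different} totally geodesic hypersurface lurking inside some $N_{\alpha_j}$. This is where one needs the arithmetic input — the hypersurfaces $\Sigma_k$ are the only embedded closed totally geodesic hypersurfaces of $M_\alpha$ along which cutting yields a manifold whose pieces are (pieces of) noncommensurable arithmetic manifolds, and more basically any maximal totally geodesic hypersurface of $M_\alpha$ meeting the ``arithmetic part'' must lie in it by a Gromov/Piatetski-Shapiro-type argument. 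I would handle this exactly as in the proof of Proposition \ref{noncommensurable} (an isometry $M_\alpha\to M_\alpha$ is in particular a commensurability, so the same structural analysis applies), treating Proposition \ref{noncommensurable} and its proof as a black box: the only additional observation needed is that when $\alpha=\beta$ the permutation $p$ produced there is an element of $\mathrm{Dih}(\alpha)$, and that the ``local'' ambiguity in reconstructing $\phi$ from $p$ is precisely an element of $G_{\alpha_0}$. Everything else — Stirling-free counting of asymmetric necklaces, the volume computation — is routine.
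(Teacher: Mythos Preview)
Your approach is correct but considerably more elaborate than what the paper does. The paper simply takes $S_k=M_{2,1,\ldots,1}$ (one copy of $N_2$ followed by $k$ copies of $N_1$). Since there is a \emph{single} $N_2$-piece, Lemma~\ref{nointer} applied directly to $\phi(N_2)$ and each $N_1$-piece forces $\phi(N_2)=N_2$ for every isometry $\phi$; hence restriction gives an injection $\mathrm{Isom}(S_k)\hookrightarrow\mathrm{Isom}(N_2)$, and $|\mathrm{Isom}(N_2)|\le 2|\mathrm{Isom}^+(\Sigma)|$ is a fixed finite bound. That is the whole proof.

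By contrast, you build a general machine: a homomorphism $\mathrm{Isom}(M_\alpha)\to\mathrm{Dih}(\alpha)$ with kernel injecting into $\mathrm{Isom}(N_{\alpha_0})$, and then a counting argument to produce asymmetric necklaces. This is sound, and your worry about isometries sending $\Sigma_k$ to ``stray'' totally geodesic hypersurfaces is exactly the right concern --- it is indeed handled by the same mechanism as Lemma~\ref{lemma}/Proposition~\ref{noncommensurable}, applied with $W=M_\alpha$, $\pi=\mathrm{id}$, $\pi'=\phi$. What your route buys is a more structural statement (control of $\mathrm{Isom}(M_\alpha)$ for \emph{any} pattern $\alpha$), at the cost of the necklace combinatorics and the rigidity analysis. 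What the paper's route buys is that a well-chosen pattern with a unique ``marker'' piece collapses all of that: no dihedral groups, no necklace counting, no tracking of how hypersurfaces permute --- just one application of Lemma~\ref{nointer}.
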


Note that the systole of the $M_k$ stays constant. I don't know whether there 
exists a sequence of compact hyperbolic $n$-manifolds the systole of which 
tends to infinity and which satisfy the conclusion above.

\begin{proof}
Let $S_k=M_{2,1,\ldots,1}$ with $k$ ones, then 
$\vol(S_k)\ge kv_1$. On the other hand, an isometry of $S_k$ must 
map the $N_2$ piece to itself because of Lemma \ref{nointer}, so that 
$|\mathrm{Isom}(S_k)|\le|\mathrm{Isom}(N_2)|$. It remains to show that the 
group $\mathrm{Isom}(N_2)$ is finite: this is because an isometry is 
determined by its restriction to the boundary, so that 
$|\mathrm{Isom}(N_2)|\le 2|\mathrm{Isom}^+(\Sigma)|<+\infty$.
\end{proof}


\section{Noncommensurability}

\label{proof}

\subsection{Some lemmas}

We recall here some results that will be useful for proving the 
noncommensurability of the manifolds constructed in Section \ref{gluing}.
The main advantage of working with arithmetic manifolds is that the
following criterion for commensurability is available
(see \cite[1.6]{GPS}).

\begin{prop}
If $\Gamma,\Gamma'$ are two arithmetic subgroups in $\SO(1,n)$ such that
the intersection $\Gamma\cap\Gamma'$ is Zariski-dense in $\SO(1,n)$, then
this intersection has finite index in both of them (so that they are in
particular commensurable).
\label{criterion}
\end{prop}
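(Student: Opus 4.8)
The plan is to prove the commensurability criterion of Proposition \ref{criterion} by combining the Borel density theorem with the standard structure theory of arithmetic subgroups. First I would recall that an arithmetic subgroup $\Gamma$ of $\SO(1,n)$ is, by definition, commensurable with the group of integer points $\mathbf{G}(\mathcal{O})$ of some $\mathbb{Q}$-algebraic group $\mathbf{G}$ whose real points (up to compact factors and isogeny) give $\SO(1,n)$; the key point is that the commensurator $\mathrm{Comm}_{\SO(1,n)}(\Gamma)$, which by Margulis's theorem is a proper subgroup (since $\SO(1,n)$ has rank one and $\Gamma$ is not of infinite index), is a well-defined arithmetic object attached to $\Gamma$, and $\Gamma'$ is commensurable with $\Gamma$ precisely when $\Gamma' \subseteq \mathrm{Comm}_{\SO(1,n)}(\Gamma)$.

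The heart of the argument is the following: if $\Delta = \Gamma \cap \Gamma'$ is Zariski-dense in $\SO(1,n)$, I want to show that every element of $\Gamma'$ normalizes (up to finite index, i.e. commensurates) $\Delta$, and likewise for $\Gamma$. The step I would carry out is to observe that $\Delta$, being Zariski-dense and contained in an arithmetic group, is itself Zariski-dense in the $\mathbb{Q}$-group $\mathbf{G}$ underlying $\Gamma$; hence by a Galois-cohomology / field-of-definition argument the $\mathbb{Q}$-structure on $\SO(1,n)$ making $\Gamma$ arithmetic is \emph{determined} by the abstract group $\Delta$ together with its embedding, up to $\mathbb{Q}$-isomorphism. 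Concretely, one uses that a Zariski-dense subgroup of $\mathbf{G}(\mathbb{R})$ consisting of matrices with entries in a number field pins down a $\overline{\mathbb{Q}}$-form, and integrality of $\Delta$ forces this to be the $\mathbb{Q}$-form of both $\Gamma$ and $\Gamma'$; therefore $\Gamma$ and $\Gamma'$ lie in a common arithmetic group (the stabilizer of a suitable lattice in the $\mathbb{Q}$-group, or more cleanly in $\mathrm{Comm}(\Delta)$), and finite index follows from the fact that two lattices in a semisimple Lie group with no compact factors in the relevant direction that are commensurable up to the ambient arithmetic group have commensurable intersection — equivalently, one invokes that arithmetic lattices are lattices and a subgroup of a lattice which is itself a lattice has finite index.

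I expect the main obstacle to be making rigorous the passage ``Zariski-dense integral subgroup $\Rightarrow$ common $\mathbb{Q}$-form,'' since a priori the two arithmetic structures could be defined over different number fields or by different quadratic forms; the resolution is that $\SO(1,n)$ is, up to isogeny, a form of a single absolutely almost simple group, so restriction of scalars together with the Borel density theorem (which guarantees $\Delta$ is Zariski-dense in the real \emph{and} hence in the rational points) forces the traces of elements of $\Delta$ to generate a fixed number field and the associated invariant (the quadratic form up to similarity) to coincide. Once both $\Gamma$ and $\Gamma'$ are seen as subgroups of a single arithmetic group $\Lambda$ with $\Delta$ of finite index in $\Lambda$, finiteness of $[\Gamma : \Delta]$ and $[\Gamma' : \Delta]$ is immediate from $[\Lambda : \Delta] < \infty$. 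I would then remark that this is exactly the statement quoted in \cite[1.6]{GPS}, and that in our applications one only ever needs the contrapositive: to show two hybrids are noncommensurable it suffices to exhibit, inside any hypothetical common arithmetic structure, an intersection that is \emph{not} Zariski-dense — i.e. that collapses into a proper subgroup, typically a conjugate of $\SO(1,n-1)$ stabilizing one of the gluing hypersurfaces $\Sigma_k$.
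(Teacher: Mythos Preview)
The paper does not supply its own proof of this proposition: it simply quotes the result from \cite[1.6]{GPS}. Your central idea --- that a Zariski-dense subgroup of an arithmetic lattice pins down the underlying $\mathbb{Q}$-form of the ambient algebraic group (via the field generated by matrix entries or traces), so that two arithmetic lattices sharing a common Zariski-dense subgroup must arise from the same $\mathbb{Q}$-form and hence be commensurable --- is correct and is essentially the argument one finds in \cite{GPS}.

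However, several of your surrounding remarks are confused and should be dropped. Your appeal to Margulis's commensurator theorem is garbled: for an \emph{arithmetic} lattice $\Gamma$ the commensurator $\mathrm{Comm}_{\SO(1,n)}(\Gamma)$ is \emph{dense} in $\SO(1,n)$, not a proper closed subgroup, and rank one plays no role here. The claimed equivalence ``$\Gamma'$ is commensurable with $\Gamma$ iff $\Gamma' \subseteq \mathrm{Comm}(\Gamma)$'' is not a definition and, to the extent it is true, requires exactly the kind of argument you are trying to give; it cannot serve as a starting point. Neither detour is needed for the actual proof. Finally, your closing paragraph misdescribes how the proposition is applied in the paper: it is not used in contrapositive form on the hybrid manifolds $M_\alpha$ (which are \emph{not} arithmetic, so the proposition does not even apply to them), but rather directly, through Lemma~\ref{nointer}, to conclude that submanifolds coming from two \emph{noncommensurable arithmetic} building blocks $M_k$, $M_{k'}$ cannot overlap in a set with nonempty interior inside any ambient hyperbolic manifold.
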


We will also require the following lemma, whose proof is similar to that of 
Corollary 1.7B in \cite{GPS}.

\begin{lem}
Let $n\ge 3$, $W$ a complete hyperbolic $n$-manifold and $U,U'$ two 
submanifolds with compact totally geodesic boundary. Then either the 
intersection $U\cap U'$ has empty interior or it contains an hyperbolic 
$n$-manifold with Zariski-dense fundamental group.
\label{ZD}
\end{lem}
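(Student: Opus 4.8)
The plan is to show that if $U \cap U'$ has nonempty interior, then it contains an open set, hence a small embedded hyperbolic ball, and the fundamental group of any connected hyperbolic $n$-manifold containing such a ball — in fact the fundamental group of a suitable submanifold — is Zariski-dense in $\SO(1,n)$. The key point is that Zariski-density is a very weak condition in this setting: a subgroup of $\SO(1,n)$ fails to be Zariski-dense only if it is contained in a proper algebraic subgroup, and for $n \geq 3$ every proper closed subgroup either fixes a point in $\mathbb{H}^n \cup \partial\mathbb{H}^n$, preserves a proper totally geodesic subspace (a copy of $\mathbb{H}^k$ with $k < n$, or a point), or is elementary/reducible in a way that forces it to preserve such a subspace. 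So I would isolate the following \textbf{key claim}: if $V$ is a connected hyperbolic $n$-manifold with totally geodesic (possibly empty) boundary which is not a single point and whose holonomy preserves no proper totally geodesic subspace of $\mathbb{H}^n$, then $\pi_1(V)$ is Zariski-dense. Applied to $U \cap U'$ (or a connected component of its interior with its induced structure), this gives the lemma.

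Concretely, first I would reduce to the case where $U\cap U'$ has nonempty interior and pick a connected component $C$ of the interior; then $C$ is itself an open hyperbolic $n$-manifold, and $\pi_1(C)$ maps to a subgroup $\Gamma_0$ of $\pi_1(W) \subset \SO(1,n)$ via the developing map. Let $G$ be the Zariski closure of $\Gamma_0$; suppose for contradiction $G \neq \SO(1,n)$. The identity component $G^\circ$ is then a proper connected subgroup of $\SO(1,n)$, and by the classification of such subgroups (for $n\geq 3$) its action on $\mathbb{H}^n$ leaves invariant a point of $\overline{\mathbb{H}^n}$ or a proper totally geodesic subspace $\mathbb{H}^k \subset \mathbb{H}^n$, $0 \le k \le n-1$ — this is exactly the input used in Corollary 1.7B of \cite{GPS}. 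Since $\Gamma_0$ normalizes $G^\circ$, it permutes the (finite or unique) invariant objects of minimal type, so after passing to a finite-index subgroup $\Gamma_1 \leq \Gamma_0$ we get a $\Gamma_1$-invariant proper totally geodesic $\mathbb{H}^k$ or fixed boundary point. Passing back to the manifold, the corresponding finite cover $\tilde C$ of $C$ then develops into (a neighbourhood of) $\mathbb{H}^k$, which forces $\tilde C$ — and hence $C$, and hence $U \cap U'$ — to be a flat or totally geodesic lower-dimensional object, contradicting that $U \cap U'$ has nonempty $n$-dimensional interior. The one borderline case $k = n-1$ needs a word: a $\Gamma_1$-invariant hyperplane would make $\Gamma_1 \subset \SO(1,n-1)\ltimes(\text{reflection})$, still a proper subgroup of dimension $< \dim\SO(1,n)$, so it cannot be the Zariski closure of a group acting with $n$-dimensional orbit, i.e. we again get a contradiction.

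The \emph{main obstacle} is making the phrase ``$C$ develops into a proper totally geodesic subspace'' rigorous and drawing the contradiction cleanly, because $C = \mathrm{int}(U \cap U')$ need not be connected, need not be simply connected, and its closure's boundary is only piecewise geodesic, so one has to be a little careful about which submanifold to apply the holonomy argument to. The clean way around this is: work entirely at the level of the holonomy representation $\rho : \pi_1(C) \to \SO(1,n)$ of a fixed component $C$ with a chosen basepoint, and observe that if $\rho(\pi_1 C)$ is \emph{not} Zariski-dense then its Zariski closure acts on $\mathbb{H}^n$ with all orbits contained in proper totally geodesic subspaces — but the developing map of $C$ is an open map (local isometry from an open subset of $\mathbb{H}^n$), so its image is open in $\mathbb{H}^n$ and is contained in a single $\rho(\pi_1 C)$-orbit-closure only if that orbit closure is all of $\mathbb{H}^n$, forcing $G = \SO(1,n)$. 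This is exactly the contradiction; one just needs to note that $C$ has nonempty interior precisely because $U \cap U'$ does (otherwise $U \cap U'$ would have empty interior, the first alternative). Finally, take for the ``hyperbolic $n$-manifold with Zariski-dense fundamental group'' a small regular neighbourhood of a compact core of $C$ — or simply $C$ itself if one allows open manifolds — so the statement is verified.
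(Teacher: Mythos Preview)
Your argument has a genuine gap: the claim that a connected component $C$ of $\operatorname{int}(U\cap U')$ must have Zariski-dense holonomy does not follow from the reasoning you give. The developing map of the universal cover of $C$ is a local isometry into $\mathbb{H}^n$ with open image, and that image is $\rho(\pi_1 C)$-invariant, but there is no reason it should lie in a single $\rho(\pi_1 C)$-orbit closure; an open $\rho(\pi_1 C)$-invariant subset of $\mathbb{H}^n$ places no constraint whatsoever on the size of $\rho(\pi_1 C)$. Likewise, the fact that $\Gamma_1$ preserves a proper $\mathbb{H}^k$ says nothing about where $C$ develops --- the developing map is not forced to land near $\mathbb{H}^k$. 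The extreme case makes this transparent: if $C$ were simply connected then $\rho(\pi_1 C)$ is trivial and its Zariski closure is $\{1\}$, yet the developing image is still open in $\mathbb{H}^n$ and no contradiction arises. Nothing in your outline excludes this, because you never use the hypothesis that $\partial U$ and $\partial U'$ are \emph{compact}. Indeed, without compactness the statement is false: in $W=\mathbb{H}^n$ take $U,U'$ to be half-spaces whose bounding hyperplanes meet; then $U\cap U'$ is a convex wedge with trivial fundamental group. Your argument, which ignores the boundary hypothesis, would apply equally to this non-example.

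The paper's proof is quite different and hinges precisely on the compact boundaries. One picks a component $S$ of $\partial U$ meeting $\operatorname{int}(U')$ and a component $S'$ of $\partial U'$ meeting $\operatorname{int}(U)$. A component $S_0$ of $S\cap U'$ is an $(n-1)$-manifold whose boundary contains a closed hyperbolic $(n-2)$-manifold $S_1$ (a piece of $S\cap\partial U'$); by \cite[Lemma~1.7.A]{GPS}, $\pi_1(S_1)$ sits in $\pi_1(S_0)$ with infinite index, and since $\pi_1(S_1)$ is already a lattice in a copy of $\SO(1,n-2)$, which is maximal in $\SO(1,n-1)$, this forces $\pi_1(S_0)$ to be Zariski-dense in the copy $H\cong\SO(1,n-1)$ determined by $S$. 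The same reasoning gives a Zariski-dense subgroup of the distinct copy $H'$ determined by $S'$, and the two together generate a Zariski-dense subgroup of $\SO(1,n)$ inside $\pi_1(U\cap U')$. Compactness of $\partial U,\partial U'$ is exactly what guarantees $S_1$ is closed, hence that $\pi_1(S_1)$ is a lattice and the infinite-index/maximality step goes through.
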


\begin{proof}
We fix a monodromy for the hyperbolic structure on $W$; then for any 
totally geodesic submanifold we get a monodromy as a subgroup of 
$\pi_1(W)\subset\SO(1,n)$. 
Suppose that $U\cap U'$ contains an open set and let $S$ (resp. $S'$) be a 
component of
$\partial U$ (resp. $\partial U'$) which intersects the interior of $U'$ 
(resp. $U$). Then $\pi=\pi_1(S\cap U')$ must be Zariski-dense
in the subgroup $H\cong\SO(1,n-1)$
of $\SO(1,n)$ determined by the compact hyperbolic $n-1$-manifold $S$.
Indeed, let $S_0$ be a connected component of $S\cap U'$. Then the boundary
$\partial S_0$ is a (union of) connected components of $S\cap \partial U'$ and
thus a compact hyperbolic manifold of dimension $n-2$; we choose a component
$S_1$. Lemma 1.7.A of \cite{GPS} tells us that $\pi_1(S_0)$ contains
$\pi_1(S_1)$ as a subgroup of infinite index. Now a discrete subgroup of
$\SO(1,n-1)$ containing a subgroup of infinite index which is a lattice in
some closed subgroup $H_2\cong\SO(1,n-2)$ must be Zariski-dense in
$\SO(1,n-1)$ because $H_2$ is a maximal closed subgroup.
In the same way $\pi'=\pi_1(S'\cap U)$ is Zariski-dense in another
distinct embedding $H'$ of $\SO(1,n-1)$. It follows that
$\langle\pi,\pi'\rangle$ is Zariski-dense in $\SO(1,n)$, and since it is
contained in $\pi_1(U\cap U')$ this proves the lemma.
\end{proof}

As a consequence of these two results we get:

\begin{lem}
Let $N_0,N_1$ be two submanifolds with totally geodesic boundary inside two 
noncommensurable arithmetic hyperbolic $n$-manifolds $M_0, M_1$. 
For any complete hyperbolic manifold $W$, finite covers $N_0',N_1'$ of 
$N_0,N_1$ and isometric embeddings 
$\iota_0,\iota_1:N_0',N_1'\hookrightarrow W$, we have 
that $\iota_0(N_0')\cap\iota_1(N_1')$ has empty interior. 
\label{nointer}
\end{lem}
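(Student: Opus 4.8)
The plan is to argue by contradiction, using Lemma \ref{ZD} to pass from a nonempty interior to a Zariski-dense subgroup, and then Proposition \ref{criterion} to derive commensurability of the ambient arithmetic manifolds. First I would suppose that $\iota_0(N_0')\cap\iota_1(N_1')$ has nonempty interior inside $W$. Fixing a monodromy representation $\pi_1(W)\hookrightarrow\SO(1,n)$, I get induced representations of $\pi_1(N_0')$ and $\pi_1(N_1')$ (via the embeddings $\iota_0,\iota_1$), each conjugate into $\SO(1,n)$ as the monodromy of a hyperbolic structure. Applying Lemma \ref{ZD} with $U=\iota_0(N_0')$ and $U'=\iota_1(N_1')$, the empty-interior alternative fails, so $U\cap U'$ contains a hyperbolic $n$-submanifold $V$ whose fundamental group is Zariski-dense in $\SO(1,n)$.

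The next step is to view $\pi_1(V)$ simultaneously inside two arithmetic groups. Since $V\subset\iota_0(N_0')$, the group $\pi_1(V)$ is a subgroup of (a conjugate of) $\pi_1(N_0')$, which in turn is a finite-index subgroup of $\pi_1(N_0)$, hence a subgroup of the arithmetic lattice $\Gamma_0=\pi_1(M_0)$ up to commensurability; concretely, $\pi_1(N_0)$ sits inside $\Gamma_0$ and $\pi_1(N_0')$ has finite index in it, so (after conjugating so that the two monodromies agree) $\pi_1(V)$ lies in $g_0\Gamma_0 g_0^{-1}$ for some $g_0\in\SO(1,n)$. Similarly $\pi_1(V)\subset g_1\Gamma_1 g_1^{-1}$ for the arithmetic lattice $\Gamma_1=\pi_1(M_1)$ and some $g_1$. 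Hence the intersection $(g_0\Gamma_0 g_0^{-1})\cap(g_1\Gamma_1 g_1^{-1})$ contains $\pi_1(V)$ and is therefore Zariski-dense in $\SO(1,n)$.

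Now $g_0\Gamma_0 g_0^{-1}$ and $g_1\Gamma_1 g_1^{-1}$ are again arithmetic subgroups of $\SO(1,n)$ (arithmeticity is preserved under conjugation and commensurability), so Proposition \ref{criterion} applies and shows that their intersection has finite index in both. Consequently $g_0\Gamma_0 g_0^{-1}$ and $g_1\Gamma_1 g_1^{-1}$ are commensurable, and then $\Gamma_0$ and $\Gamma_1$ are commensurable (in the wide sense, allowing conjugation), i.e.\ $M_0$ and $M_1$ are commensurable, contradicting the hypothesis. Therefore the interior is empty.

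The main subtlety to be careful about is the bookkeeping with conjugations: the monodromy of $N_0'$ coming from its embedding into $W$ need not literally equal the monodromy coming from its arithmetic structure in $M_0$, but by Mostow rigidity (or simply because both are discrete faithful representations of $\pi_1(N_0')$ into $\SO(1,n)$ with the same restriction to the boundary hypersurfaces, which determines the representation) the two differ by conjugation in $\SO(1,n)$, and conjugation does not affect arithmeticity or the applicability of Proposition \ref{criterion}. Once that identification is made, the argument is just a concatenation of Lemma \ref{ZD} and Proposition \ref{criterion}, so there is no serious obstacle beyond this routine rigidity remark.
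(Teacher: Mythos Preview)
Your proof is correct and follows the same approach as the paper: contradiction via Lemma~\ref{ZD} to produce a Zariski-dense subgroup, then Proposition~\ref{criterion} to force commensurability of $M_0$ and $M_1$. Your version is in fact more careful than the paper's, which tacitly suppresses the conjugation bookkeeping you spell out.
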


\begin{proof}
Suppose that some component $N$ of $\iota_0(N_0')\cap\iota_1(N_1')$ contains 
an open set; then by Lemma \ref{ZD}, $\pi_1(N)$ is Zariski-dense in $\SO(1,n)$.
 Thus, the intersection $\pi_1(N_0)\cap\pi_1(N_1)$ contains a Zariski-dense 
subgroup, which contradicts Proposition \ref{criterion} and the fact that 
$M_0,M_1$ are noncommensurable.
\end{proof}


\subsection{Conclusion}

We keep notation as in the statement of Proposition \ref{noncommensurable}. 
For convenience we denote $M=M_{\alpha_1,\ldots,\alpha_m}$ and 
$M'=M_{\beta_1,\ldots,\beta_m}$. Suppose that $M$ is commensurable to $M'$, 
let $W$ be a common finite covering of $M$ and $M'$ with covering maps 
$\pi,\pi'$. We will use the following lemma.

\begin{lem}
If $i\not=j$ are two indices such 
that $\alpha_i\not=\alpha_{i+1}=\ldots=\alpha_j\not=\alpha_{j+1}$ then 
any connected component of $\pi^{-1}(N_{\alpha_i,\ldots,\alpha_j})$ is equal to a 
connected component of $(\pi')^{-1}(N_{\beta_{i'},\ldots,\beta_{j'}})$ 
where $j'-i'=j-i$ and for $l=1,\ldots,j-i$ we have 
$\beta_{i'+l}=\alpha_{i+l}$. 
\label{lemma}
\end{lem}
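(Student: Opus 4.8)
The idea is to build up the preimage $\pi^{-1}(N_{\alpha_i,\ldots,\alpha_j})$ one block at a time, using Lemma \ref{nointer} to force each $N$-block of $M$ to coincide (after lifting) with an $N$-block of $M'$ of the same index, and the hypersurfaces $\Sigma_k$ separating them to coincide as well. First I would fix a connected component $\widetilde{N}$ of $\pi^{-1}(N_{\alpha_{i+1},\ldots,\alpha_j})$; this is a connected, complete (with boundary) hyperbolic manifold isometrically embedded in $W$, built entirely out of copies of $N_{\alpha_{i+1}}$ (all the indices $\alpha_{i+1},\ldots,\alpha_j$ being equal, say to $k$). Since $\pi'$ is also a covering $W\to M'$, the manifold $W$ is tiled by the lifts of the pieces $N_{\beta_1},\ldots,N_{\beta_m}$; so $\widetilde N$ meets some connected component $\widetilde N'$ of $(\pi')^{-1}(N_{\beta_{l}})$ in a set with nonempty interior. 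As $M_k$ and $M_{\beta_l}$ are noncommensurable when $k\ne\beta_l$, Lemma \ref{nointer} (applied with $N_0=N_k$, $N_1=N_{\beta_l}$ and the covering manifold $W$) forbids this unless $\beta_l=k$; so $\beta_l=k=\alpha_{i+1}$.

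Next I would upgrade ``meets in nonempty interior'' to ``equals''. A component $\widetilde N'$ of $(\pi')^{-1}(N_k)$ is a copy of a finite cover of $N_k$; its boundary lifts to totally geodesic hypersurfaces in $W$. The key point is that a totally geodesic hypersurface in $W$ that enters the interior of $\widetilde N$ cannot have its boundary there: by the same Zariski-density argument as in Lemma \ref{ZD}, a proper totally geodesic submanifold crossing into $\widetilde N$ produces a Zariski-dense subgroup inside $\pi_1(N_k)\cap\pi_1(N_k)$ — harmless — but if it crosses out of $\widetilde N$ through a wall $\widetilde\Sigma$ lifting some $\Sigma_t$, then on the other side sits a component of $\pi^{-1}(N_{\alpha_t})$ with $\alpha_t\ne k$, and again Lemma \ref{nointer} is contradicted. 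Hence the walls of $\widetilde N'$ either lie entirely in the interior of $\widetilde N$ (impossible, since $\widetilde N'$ is a single $N_k$-block and $\widetilde N$ is too, so their interiors can only share a full block) or coincide with walls of $\widetilde N$. Running this argument across each of the $j-i$ blocks of $\widetilde N$ in turn, one concludes $\widetilde N=\widetilde N'$ is impossible to break, i.e.\ $\widetilde N$ is a single component of $(\pi')^{-1}(N_{\beta_{i'+1},\ldots,\beta_{j'}})$ with $\beta_{i'+l}=\alpha_{i+l}$ for all $l$, and necessarily $j'-i'=j-i$ by counting blocks. Finally I would extend by one block on each end: the hypotheses $\alpha_i\ne\alpha_{i+1}$ and $\alpha_j\ne\alpha_{j+1}$ say that $\widetilde N$ is a maximal monochromatic stretch, so the block of $\pi^{-1}$ attached to either end has a different index; matching this against the $M'$-side (again via Lemma \ref{nointer}) pins down exactly where the stretch $\beta_{i'+1},\ldots,\beta_{j'}$ sits and shows the corresponding boundary hypersurfaces of $\pi^{-1}(N_{\alpha_i,\ldots,\alpha_j})$ and $(\pi')^{-1}(N_{\beta_{i'},\ldots,\beta_{j'}})$ agree, giving the stated equality of components.

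I expect the main obstacle to be the bookkeeping in the ``upgrade to equality'' step: one has to rule out the possibility that a lifted wall of $M'$ slices through the interior of a block of $W$ coming from $M$ without being one of the walls $\widetilde\Sigma_t$ of that block, and do so uniformly along a chain of blocks. The clean way to handle it is to observe that $W$ carries a single decomposition into maximal totally geodesic pieces — the closures of the components of the complement of the union of all lifts of all the $\Sigma$'s from both sides — and that each such piece maps, under $\pi$ and under $\pi'$, onto a single $N$-block; Lemma \ref{nointer} then forces the two labelings of each piece to agree, and connectedness of $\widetilde N$ propagates the index-matching along the stretch. Everything else (that isometric embeddings of totally geodesic submanifolds lift walls to walls, that finite covers of the $N_k$ are again built of $N_k$-blocks, that the combinatorial indices $i',j'$ are determined up to the allowed shift) is routine.
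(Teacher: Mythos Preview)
Your plan is viable, but it takes a longer road than the paper's. The paper bypasses all wall-matching with a symmetric two-step containment argument: first (as you do) Lemma~\ref{nointer} forces every $(\pi')^{-1}(N_{\beta_l})$ with $\beta_l\ne k$ to have empty-interior intersection with $N:=\widetilde N$, so by connectedness $N$ lies in a single component $N'$ of $(\pi')^{-1}(N_{\beta_{k+1},\ldots,\beta_{k+l}})$ for a run $\beta_{k+1}=\cdots=\beta_{k+l}=k$; then one simply swaps the roles of $\pi$ and $\pi'$ and uses the hypotheses $\alpha_i\ne k$, $\alpha_{j+1}\ne k$: Lemma~\ref{nointer} now says $N'$ cannot meet $\pi^{-1}(N_{\alpha_i})$ or $\pi^{-1}(N_{\alpha_{j+1}})$ in a set with interior, and since these are exactly what borders $N$ in $W$, the connected set $N'\supset N$ is trapped inside $N$, hence $N'=N$. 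The equality $j'-i'=j-i$ is then a one-line metric remark (the boundary-to-boundary distance in $N=N'$ reads the same whether computed downstairs via $\pi$ or via $\pi'$).

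A word of caution about your primary ``upgrade'' step: you seem to want the \emph{individual} $M'$-block walls to coincide with $M$-block walls inside $\widetilde N$, but there is no evident reason for this, and Lemma~\ref{nointer} gives no traction there since both sides of any internal wall in a monochromatic region carry the same label $k$. (Your parenthetical ``$\widetilde N'$ is a single $N_k$-block and $\widetilde N$ is too'' is also off: you defined $\widetilde N$ as the lift of a run of $j-i$ blocks.) Your fallback proposal---refine by all $\pi$- and $\pi'$-walls simultaneously and label the resulting pieces---does work, but the double-containment trick above makes even that unnecessary and eliminates precisely the bookkeeping you flagged as the main obstacle.
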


\begin{proof}
Let $l\in\{0,\ldots,m-1\}$ be any index 
such that $\beta_l\not=\alpha_{i+1}$. Then by Lemma \ref{nointer} we have that 
$(\pi')^{-1}(N_{\beta_l})\cap \pi^{-1}(N_{\alpha_{i+1},\ldots,\alpha_j})$
has empty interior. It follows that for any connected component $N$ of 
$\pi^{-1}(N_{\alpha_{i+1},\ldots,\alpha_j})$ there are 
$\beta_{k+1}=\ldots=\beta_{k+l}=\alpha_{i+1}$ such that $N$ is contained 
in a connected component $N'$ of 
$(\pi')^{-1}(N_{\beta_{k+1},\ldots,\beta_{k+l}})$. But again by Lemma 
\ref{nointer} we get that $N'\cap\pi^{-1}(N_{\alpha_i})$ as well as 
$N'\cap\pi^{-1}(N_{\alpha_{j+1}})$ have empty interior, which forces 
$N'=N$. All that is left to show is that $l=i-j$, which follows from the 
fact that the distance between the boundaries of $N$ and $N'$ is the 
same.
\end{proof}

\begin{proof}[Proof of Proposition \ref{noncommensurable}]
Suppose first that all $\alpha_i$ are equal to some $k$. 
Then $M$ is a cyclic cover of $M_k$, and in particular it is arithmetic. 
It follows that $M'$ is arithmetic too and by Gromov and Piatetski-Shapiro's 
argument if this is the case then all $\beta_i$ must be equal too, 
say to some $k'$. Now we get that $M_k$ and $M_{k'}$ are commensurable and 
it follows that we have $k=k'$.

Now suppose there is an index $0\le i_1\le m-1$ such that 
$\alpha_{i_1}\not=\alpha_{i_1+1}$: we may suppose that $i_1=0$. 
Choose a connected component $N$ of $\pi^{-1}(N_{\alpha_{i_1+1}})$, let 
$S=\pi^{-1}(\Sigma_0)\cap\partial N$. Take a loop $c$ in $M_{\alpha}$ which 
meets each hypersurface $\Sigma_i$ exactly once and transversally and a 
lift $\tilde{c}$ of $c$ to $W$ based at some point on $S$. Then as we go along 
$\tilde{c}$ we meet connected components of 
$\pi^{-1}(N_{\alpha_1,\ldots,\alpha_{i_2}}),\ldots,\pi^{-1}(N_{\alpha_{i_{k-1}+1},\ldots,\alpha_{i_k}})$ 
where $i_k=0$ and we have 
$\alpha_{i_l} \not= \alpha_{i_l+1}=\ldots=\alpha_{i_{l+1}} \not= \alpha_{i_{l+1}+1}$. 
Lemma \ref{lemma} then implies that $\pi^{-1}(N_{\beta_{i_l+1},\ldots,\beta_{i_{l+1}}})$ is 
a connected component of 
$(\pi')^{-1}(N_{\beta_{i_l'+1},\ldots,\beta_{j_l'}}))$ where $i_l'-j_l'=i_l-i_{l+1}$ and the 
$\beta_i$s satisfy the conditions 
$\beta_{i_l'+1}=\alpha_{i_l+1},\ldots,\beta_{j_l'}=\alpha_{i_{l+1}}$. Since 
$(\pi')^{-1}(N_{\beta_{i_l'+1},\ldots,\beta_{j_l'}})$ and 
$(\pi')^{-1}(N_{\beta_{i_{l+1}'+1},\ldots,\beta_{j_{l+1}'}})$ share a boundary component 
we must have $j_l'=i_{l+1}'$. This 
finishes the proof (we get $p=i_1'$).
\end{proof}


\section{Arithmetic manifolds with totally geodesic hypersurfaces}

The following construction is now standard and goes back at least to 
Millson's paper \cite{Millson}. We recall it here for the reader's 
convenience and because for an even $n$ it is not proved in 
\cite{GPS} that one can get noncommensurable manifolds this way.

\begin{prop}
For any $n\ge 2$ there exists countably many closed 
arithmetic hyperbolic $n$-manifolds which all contain some fixed 
$n-1$-hyperbolic manifold $S$ as an embedded nonseparating totally geodesic 
hypersurface and which are noncommensurable to each other.
\label{millson}
\end{prop}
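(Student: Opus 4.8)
The plan is to construct the manifolds as arithmetic hyperbolic manifolds defined over a field $\mathbb{Q}(\sqrt{d})$ (or $\mathbb{Q}$ itself for suitable quadratic forms) associated to a family of quadratic forms of signature $(1,n)$, arranged so that each form contains a common sub-form of signature $(1,n-1)$ giving rise to the fixed hypersurface $S$, while the various ambient forms are pairwise non-isomorphic over the base field (hence the associated arithmetic groups are incommensurable). Concretely, I would take the standard form $q_0 = -\sqrt{t}\,x_0^2 + x_1^2 + \cdots + x_{n-1}^2$ over a real quadratic field $k = \mathbb{Q}(\sqrt{t})$ (with $t$ chosen so that $\sqrt{t}$ is negative under the second embedding, ensuring compactness via Borel--Harish-Chandra once we pass to a suitable form; alternatively use $q = x_1^2 + \cdots + x_n^2 - \sqrt{2}x_0^2$ as in \cite{GPS}), and then for each squarefree integer $p$ (or prime $p$) that is inert/has the right Hilbert-symbol behaviour, adjoin a coordinate with coefficient scaled by $p$: $q_p = q_0 \perp \langle p \rangle$ on the extra coordinate $x_n$. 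The $(n-1)$-dimensional form $q_0$ restricted appropriately cuts out a totally geodesic $S$ embedded in each $M_p = \mathbf{O}(q_p,\mathcal{O}_k)\backslash \mathbb{H}^n$.

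The key steps, in order, are: (1) Fix the $(n-1)$-form and the base field so that its orthogonal group yields a \emph{compact} hyperbolic $(n-1)$-manifold $S$ — over a real quadratic field using a form anisotropic at the second real place, which is the classical Millson/GPS recipe. (2) For each relevant $p$, extend to an $n$-form $q_p$ by an orthogonal direct summand, check that it still has signature $(1,n)$ at the identity place and is definite at the other place, so $M_p$ is a compact arithmetic hyperbolic $n$-manifold; pass to a torsion-free finite-index subgroup (Selberg) and, if necessary, to a further finite cover so that $S$ embeds and is \emph{nonseparating} — nonseparation can be arranged by a $\mathbb{Z}/2$ cover dual to the hypersurface class in $H_{n-1}$, or by noting that the reflection in the hyperplane $x_n = 0$ interchanges the two sides, so a subgroup not containing that reflection sees a connected complement, exactly as in \cite[\S 2]{GPS}. (3) Prove incommensurability: by the arithmeticity classification (Gromov--Piatetski-Shapiro \cite[\S 2]{GPS}, going back to Vinberg), two such lattices are commensurable iff the quadratic forms $q_p$ and $q_{p'}$ are similar over $k$; then use invariants of quadratic forms over $k$ — discriminant and Hasse--Witt invariant / the set of finite places where the form is anisotropic — to distinguish the $q_p$ for infinitely many $p$. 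Choosing $p$ to range over primes with prescribed splitting behaviour in $k$ gives infinitely many pairwise non-similar forms, hence countably many incommensurable $M_p$.

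The main obstacle I anticipate is step (3) for \emph{even} $n$ — precisely the case the authors flag as not covered in \cite{GPS}. When $n$ is odd the form $q_p$ has even dimension $n+1$ and the discriminant/Hasse invariant machinery distinguishes forms cleanly; when $n$ is even the ambient form has odd dimension $n+1$, similarity classes are governed by the discriminant and the Hasse invariant up to the usual odd-rank subtleties (scaling a form of odd rank $d$ by $\lambda$ multiplies the discriminant by $\lambda^d$ and twists the Hasse invariant), so one must be careful that the chosen family of scalars $p$ genuinely produces infinitely many similarity classes and not just finitely many. I would handle this by computing, for $q_p = q_0 \perp \langle p\rangle$, the Hasse--Witt invariant at each finite place $v$ of $k$ as a function of the Hilbert symbols $(\cdot,\cdot)_v$, observing that the set of places $v$ at which $q_p$ fails to be isotropic (equivalently, where the Hasse invariant obstructs isotropy) depends on $p$ in a way that varies with $p$, and then invoking Chebotarev/Dirichlet to produce infinitely many $p$ realizing distinct such sets. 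This reduces the even-dimensional case to a finite check of Hilbert symbols, which I would not carry out in detail here.

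For completeness I would also record that the $S$ obtained is the \emph{same} (up to isometry) in every $M_p$ because it comes from the \emph{fixed} lower-dimensional form $q_0$ and a \emph{fixed} finite-index torsion-free subgroup of $\mathbf{O}(q_0,\mathcal{O}_k)$, chosen once and for all before the extension to $q_p$ — the coherence of the covers in step (2) must be set up so that the hypersurface subgroup is literally the same lattice in $\SO(1,n-1)$ for all $p$, which is automatic since the embedding $\SO(q_0) \hookrightarrow \SO(q_p)$ is the obvious block inclusion.
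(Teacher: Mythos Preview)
Your proposal is correct and follows essentially the same route as the paper: a family $q_a = a x_1^2 + x_2^2 + \cdots + x_n^2 - \sqrt{2}\,x_{n+1}^2$ over $\mathbb{Q}(\sqrt{2})$, congruence subgroups \`a la Millson for the embedded nonseparating hypersurface (which is fixed because it depends only on the coefficients $a_2,\ldots,a_{n+1}$ and the level $\mathfrak{p}$), the discriminant for odd $n$, and Hasse--Witt invariants via Hilbert symbols plus Dirichlet for even $n$. The only difference is that where you defer the even-$n$ Hilbert-symbol check, the paper actually performs it: one takes primes $p\equiv -1\pmod 8$ (so that $2$ is a fourth power and $-1$ a non-square in $\mathbb{Q}_p$), computes $\epsilon_{\mathbb{Q}_{p_k}}(q_{p_k})=-1$ versus $\epsilon_{\mathbb{Q}_{p_k}}(\lambda q_{p_l})=1$ for $l<k$, and concludes non-similarity.
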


\begin{proof}
The standard way to construct arithmetic hyperbolic manifolds which contain
totally geodesic hypersurfaces is as follows: let $F$ be a totally real number
field and $q$ a quadratic form in $n+1$ variables over $F$ such that $q$ is
definite positive at all real places of $F$ but one, where it has signature
$(1,n)$. Then the group of integer points $\Gamma=\SO(q,\mathcal{O}_F)$ is a
lattice in $\SO(1,n)$. If $q$ is written as $a_1x_1^2+\ldots+a_{n+1}x_{n+1}^2$
where $a_1,\ldots,a_n$ are totally positive and $a_{n+1}$ is negative at
exactly one real place,
then $\Gamma$ contains the subgroup $\Lambda$ associated to the quadratic 
form in $n$ variables $a_2x_2^2+\ldots+a_{n+1}x_{n+1}^2$. Define the principal 
congruence subgroup of level $\mathfrak{p}$, $\Gamma(\mathfrak{p})$, to 
be the kernel 
$\Gamma\cap\ker(\SL(n+1,\mathcal{O}_F)\rightarrow\SL(n+1,\mathcal{O}_F/\mathfrak{p}))$. It follows from the proof of 
Selberg's Lemma that for $|\mathfrak{p}|$ big enough the lattice 
$\Gamma(\mathfrak{p})$ is torsion-free, so that the subgroup 
$\Lambda(\mathfrak{p})$ gives rise to an imbedding 
$\Lambda(\mathfrak{p})\backslash\mathbb{H}^{n-1}\rightarrow\Gamma(\mathfrak{p})\backslash\mathbb{H}^{n}$ 
and a standard argument shows that this is in fact an embedding.
Moreover we can choose $\mathfrak{p}$ so that this hypersurface $S$ is 
non separating, that is $M-S$ is connected so that it is the interior 
of a compact manifold with two boundary components isometric to $S$. 
For all this see \cite[Section 2]{Millson} or \cite{GPS}. 
Finally, note that the isometry type of $S$ depends only on 
$a_2,\ldots,a_{n+1}$ and $\mathfrak{p}$.

The simplest exemple of the previous procedure 
is when $F=\mathbb{Q}$ and $a_1,\ldots,a_n>0,\, a_{n+1}<0$ but then the
manifolds obtained are noncompact for $n\ge 4$. However, if
$F=\mathbb{Q}(\sqrt{d})$ for a square-free rational integer $d>1$,
$a_1,\ldots,a_n\in\mathbb{Q}_+^*$ and $a_{n+1}/\sqrt{d}\in\mathbb{Q}^*$ then
$q$ is anisotropic over $F$ so that $\Gamma_q\backslash\mathbb{H}^3$ is
compact. In the sequel we make the simplest choice $d=2$, 
$a_1=\ldots=a_n=1$ and $a_{n+1}=-\sqrt{2}$. Put 
$q_a=ax_1^2+x_2^2+\ldots+x_N^2-\sqrt{2}x_{n+1}^2$. We want to show that there 
is an infinite number of $a\in\mathcal{O}_F$ such that the lattices 
$SO(q_a,\mathcal{O}_F)$ are all noncommensurable to each other. This 
boils down, by \cite[2.6]{GPS}, to show that the quadratic forms 
$q_a,\lambda q_{a'}$ are nonisometric for all $a,a'$ and 
$\lambda\in F^*$. Suppose first that $n$ is odd: then the discriminant of 
$\lambda q_a$ equals that of $q_a$ for all $\lambda$, thus we need only 
find a countable set $A\subset\mathcal{O}_F$ such that for all 
$a\not=a'\in A$ we have $a/a'\not\in (F^*)^2$. This 
is doable since the image of $\mathcal{O}_F-\{0\}$ in $F^*/(F^*)^2$ is 
easily seen to be infinite.

Now we treat the case of an even $n$: for that purpose we need to recall some 
definitions. 
Let $k$ be any field; for $u,v\in k^*$ the Hilbert symbol $(u,v)_k$ is defined 
in \cite[III,1.1]{CA} as $1$ if $1=uv^2+vy^2$ for some $x,y\in k$ and 
$-1$ otherwise. Then it is shown in \cite[IV, Th\'eor\`eme 2]{CA} that 
\begin{equation*}
\epsilon_k(q)=\prod_{i<j}(a_i,a_j)_k
\end{equation*}
is an isometry invariant of $q$ over $k$. Now we suppose that 
$k=\mathbb{Q}_p$ for a prime $p>2$, then for $a,b\in\mathbb{Z}_p$ we have 
$(a,b)_{\mathbb{Q}_p}=-1$ if and only if either $a$ (resp. $b$) has odd 
$p$-valuation and $b$ (resp. $a$) has even valuation but is not a square in 
$\mathbb{Q}_p$, or if $a,b$ have the same 
$p$-valuation mod 2 and $-a^{-1}b$ is a square in $\mathbb{Q}_p$ (see 
\cite[III, Th\'eor\`eme 2]{CA}). 

Now suppose that we have a rational prime $p$ such that 
$p= -1 \pmod{8}$. Then 2 is a biquadratic residue modulo $p$ according 
to Gauss' biquadratic reciprocity law, and since $p\not= 1\pmod{4}$ we know 
that $-1$ is not a square modulo $p$. By Hensel's lemma it follows that all 
this is still true in $\mathbb{Q}_p$. For any $\lambda\in\mathbb{Q}_p^*$ it 
follows that 
$(\lambda,\lambda)_{\mathbb{Q}_p}=1$, $(\lambda,-\sqrt{2}\lambda)_{\mathbb{Q}_p}=-1$ 
and more generally, for any $u\in\mathbb{Z}_p^*$, 
$(\lambda u,-\sqrt{2}\lambda)_{\mathbb{Q}_p}=-(\lambda u,\lambda)_{\mathbb{Q}_p}$
We can now compute:
\begin{eqnarray}
\begin{split}
\epsilon_{\mathbb{Q}_p}(\lambda q_u) &= (\lambda u,\lambda)_{\mathbb{Q}_p}^{n-1}\times(\lambda,-\sqrt{2}\lambda)_{\mathbb{Q}_p}^{n-1}\times(\lambda u,\-\lambda\sqrt{2})_{\mathbb{Q}_p}\\
                      &= -(\lambda u,\lambda)_{\mathbb{Q}_p}\times(\lambda,-\sqrt{2}\lambda)_{\mathbb{Q}_p}=1.
\label{hilbert}
\end{split}
\end{eqnarray}
Now if we let $p_k,k\ge 1$ be an infinite increasing sequence of primes all 
equal to -1 modulo 8 (such a sequence exists by Dirichlet's theorem) 
we get by \eqref{hilbert} that for all $l<k$ 
we have $\epsilon_{\mathbb{Q}_{p_k}}(q_{p_l})=1$. On the other hand, we have 
$\epsilon_{\mathbb{Q}_{p_k}}(q_{p_k})=(p_k,-\sqrt{2})_{\mathbb{Q}_{p_k}}=-1$. It 
follows that for $l<k$ and $\lambda\in F\subset\mathbb{Q}_{p_k}$ the 
quadratic forms $q_{p_k}$ and $\lambda q_{p_l}$ are not isometric over 
$F$, so that the lattices $\SO(q_{p_k},\mathcal{O}_F)$ are noncommensurable 
one to the other.

\end{proof}


\bibliography{bibli}

\bibliographystyle{plain}

\end{document}